\newcommand{\rightt}{\right]\right]}
\newcommand{\leftt}{\left[\left[}
\newcommand{\opt}{\text{opt}}
\begin{document}

\title*{Weak scalability of domain decomposition methods for discrete fracture networks}
% Use \titlerunning{Short Title} for an abbreviated version of
% your contribution title if the original one is too long
\author{Stefano Berrone and Tommaso Vanzan}
% Use \authorrunning{Short Title} for an abbreviated version of
% your contribution title if the original one is too long
\institute{Stefano Berrone \at Politecnico di Torino, Italy, \email{stefano.berrone@polito.it}
\and Tommaso Vanzan \at Ecole Polytecnique F\'{e}d\'{e}rale de Lausanne, Switzerland,  \email{tommaso.vanzan@epfl.ch}}

\maketitle

\abstract*{Discrete Fracture Networks (DFNs) are complex three-dimensional structures 
characterized by the intersection of planar polygonal fractures, and are used to model flows in fractured media. Despite being suitable for Domain Decomposition (DD) techniques, there are relatively few works on the application of DD methods to DFNs, see, e.g., \cite{pichot2014mortar,doi:10.1137/120865884} and references therein.
In this manuscript, we perform a theoretical study of Optimized Schwarz Methods (OSMs) applied to DFNs.
Interestingly, we prove that the OSMs can be weakly scalable (that is, they may converge to a given tolerance in a number of iterations independent of the number of fractures) under suitable assumptions on the domain decomposition. This contribution fits in the renewed interest on the weak scalability of DD methods after the works \cite{cances2013domain,ciaramella2020scalability,Chaouqui2018}, which showed weak scalability of DD methods for specific geometric configurations, even without coarse spaces.
Despite simplification assumptions which may be violated in practice, our analysis provides heuristics to minimize the computational efforts in realistic settings.
Finally, we emphasize that the methodology proposed can be straightforwadly generalized to study other classical DD methods applied to DFNs (see, e.g., \cite{Chaouqui2018}).}

%\abstract{}

\section{Introduction}
Discrete Fracture Networks (DFNs) are complex three-dimensional structures 
characterized by the intersections of planar polygonal fractures, and are used to model flows in fractured media. Despite being suitable for Domain Decomposition (DD) techniques, there are relatively few works on the application of DD methods to DFNs, see, e.g., \cite{pichot2014mortar,doi:10.1137/120865884} and references therein.

In this manuscript, we present a theoretical study of Optimized Schwarz Methods (OSMs) applied to DFNs.
Interestingly, we prove that the OSMs can be weakly scalable (that is, they converge to a given tolerance in a number of iterations independent of the number of fractures) under suitable assumptions on the domain decomposition. This contribution fits in the renewed interest on the weak scalability of DD methods after the works \cite{cances2013domain,ciaramella2020scalability,Chaouqui2018}, which showed weak scalability of DD methods for specific geometric configurations, even without coarse spaces.

Despite simplifying assumptions which may be violated in practice, our analysis provides heuristics to minimize the computational efforts in realistic settings.
Finally, we emphasize that the methodology proposed can be straightforwadly generalized to study other classical DD methods applied to DFNs (see, e.g., \cite{Chaouqui2018}).

\section{Scalability analysis for one-dimensional DFNs}
We start considering a simplified DFN made of one-dimensional fractures $F_i$, $i=1,...,N$ arranged in a staircase fashion depicted in Fig \ref{DFN}. The DFN is $\Omega:=\cup_{i=1}^{N} F_i$. The boundary of the fractures is denoted with $\partial F_i$ and it holds $\partial \Omega= \cup_{i=1}^{N}\partial F_i$. Further, $\partial \Omega$ can be decomposed into a Dirichlet boundary $\Gamma_D$ and a Neumann boundary $\Gamma_N$, so that  $\partial \Omega=\Gamma_D \cup \Gamma_N$.
The intersections between fractures are called traces and are denoted by $S_m$, $m=1,...,N-1=:M$.
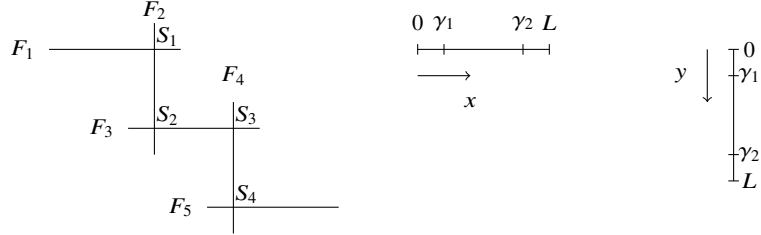
\begin{figure}\label{fig:1Dstair}
\centering
\begin{tikzpicture}[scale=0.35]
\node at (-26,10) {$F_1$};
\draw[black] (-25,10) -- (-20,10);
\draw[black] (-21,11) -- (-21,6);
\node at (-20.5,10.5) {$S_1$};
%\node at (-19,11) {$(\gamma_2,\gamma_1)$};
\node at (-21,11.5) {$F_2$};
\node at (-23,7) {$F_3$};
\node at (-20.5,7.5) {$S_2$};
%\node at (-19.5,8) {$(\gamma_1,\gamma_2)$};
\draw[black] (-22,7) -- (-17,7);
\node at (-18,9) {$F_4$};
\draw[black] (-18,8) -- (-18,3);
\node at (-17.5,7.5) {$S_3$};
%\node at (-16.5,5) {$(\gamma_2,\gamma_1)$};
\node at (-20,4) {$F_5$};
\draw[black] (-19,4) -- (-14,4);
\node at (-17.5,4.5) {$S_4$};
%\node at (-7,10) {$F_{2j+1}$};
\begin{scope}[xshift={-6cm}]
\draw[black] (-5,10) -- (0,10);
\draw[black] (-5,9.8) -- (-5,10.2);
\draw[black] (0,9.8) -- (0,10.2);
\draw[black] (-1,9.8) -- (-1,10.2);
\draw[black] (-4,9.8) -- (-4,10.2);
\node at (-5,11) {$0$};
\draw[->] (-5,9) -- (-3,9);
\node at (-3,8) {$x$};
\node at (0,11) {$L$};
\node at (-4,11) {$\gamma_1$};
\node at (-1,11) {$\gamma_2$};
\quad 
%\node at (7,12) {$F_{2j}$};
\draw[black] (7,10) -- (7,5);
\draw[black] (6.8,10) -- (7.2,10);
\draw[black] (6.8,9) -- (7.2,9);
\draw[black] (6.8,5) -- (7.2,5);
\draw[black] (6.8,6) -- (7.2,6);
\draw[->] (6,10) -- (6,8);
\node at (5,9) {$y$};
\node at (7.6,10) {$0$};
\node at (7.6,5) {$L$};
\node at (7.6,9) {$\gamma_1$};
\node at (7.6,6) {$\gamma_2$};
\end{scope}
\end{tikzpicture}
\caption{Geometry of the simplified DFN and of its one-dimensional fractures.}\label{DFN}
\end{figure} 
We assume that both the vertical and horizontal fractures have two traces located at $\tau=\gamma_1$ and $\tau=\gamma_2$ with $\gamma_1<\gamma_2$, ($\tau$ being the local coordinate), except the first and last fracture.
The mathematical DFN model consists in the coupled system of partial differential equations for the hydraulic heads $u_j$,
\begin{align}
-\nu_j\partial_{\tau_j \tau_j} u_j&= f\quad \text{ in }F_j,\quad \mathcal{B}_j(u)=0\quad\text{ on } \partial F_j,\quad j=1,\dots,N,\label{eq:main_equation}\\
u_{|F_i}&=u_{|F_{i+1}}\quad \text{ on } S_{i},\text{ }i=1,...,M, \label{eq:continuity_condition}\\
\left[ \left[ \frac{\partial u_i}{\partial \tau_i}\right]\right] &+ \left[ \left[ \frac{\partial u_{i+1}}{\partial \tau_{i+1}}\right]\right]=0\quad \text{ on } S_{i},\text{ }i=1,...,M,\label{eq:flux_condition}
\end{align}
where $\mathcal{B}_j$ represent boundary conditions (b.c) (specified later), $\nu_j$ is the local diffusion coefficient, and $\left[\left[v\right]\right]$ is the jump of $v$ across the intersection of fractures. The local solutions $u_j$ are coupled through \eqref{eq:continuity_condition}-\eqref{eq:flux_condition} which enforce continuity of the hydraulic heads, and balance between the jumps of the conormal derivatives across the traces.

System \eqref{eq:main_equation}-\eqref{eq:flux_condition} is clearly prone to a DD approach. We consider a nonoverlapping domain decomposition in which each subdomain corresponds to a single fracture, and the 
optimized Schwarz method (OSM) that, starting from an initial guess $u^0_j$, $j=1,...,N$, computes for $n=1,2,...$ until convergence
\begin{align}\label{eq:OSM}
-\nu_j\partial_{\tau_j\tau_j} u^n_j&= f_j\quad \text{ in }F_j,\quad \mathcal{B}_j(u^n_j)=0\quad \text{ on } \partial F_i, \nonumber\\
\leftt \frac{\partial u^n_j}{\partial \tau_j}\rightt &+ s_{j-1}^+ u^n_{j} = -\leftt \frac{\partial u^{n-1}_{j-1}}{\partial \tau_{j-1}}\rightt + s_{j-1}^+ u^{n-1}_{j-1}\text{ on } S_{j-1}, \\
\leftt \frac{\partial u^n_j}{\partial \tau_j}\rightt &+ s_{j}^- u^n_{j} = -\leftt \frac{\partial u^{n-1}_{j+1}}{\partial \tau_{j+1}}\rightt + s_{j}^- u^{n-1}_{j+1} \text{ on } S_{j}.\nonumber
\end{align}
for $j=2,...,N-1$, while for $j=1,N$,
\begin{align}\label{eq:OSM_2}
&-\nu_1\partial_{\tau_1\tau_1} u^n_1= f_1\text{ in } F_1,\quad \mathcal{B}_1(u^n_1)=0,\quad
-\nu_N\partial_{\tau_{N}\tau_{N}} u^n_{N}= f_N \text{ in }F_{N}, \quad \mathcal{B}_{N}(u^n_{N})=0,\nonumber\\
&\leftt \frac{\partial u^n_1}{\partial \tau_1}\rightt + s_{1}^- u^n_{1} = -\leftt \frac{\partial u^{n-1}_{2}}{\partial \tau_{2}}\rightt + s_{1}^- u^{n-1}_{2} \text{ on } S_{1},\\
&\leftt \frac{\partial u^{n}_{N}}{\partial \tau_{N}}\rightt + s_{N-1}^+ u^n_{N} = -\leftt \frac{\partial u^{n-1}_{N-1}}{\partial \tau_{N-1}}\rightt + s_{N-1}^+ u^{n-1}_{N-1}\text{ on } S_{N-1}.\nonumber
\end{align}
The functions $f_j$ are the restriction of the force term on the fracture $F_j$ and $s_j^{+,-}$, $j=1,...,M$ are positive parameters. 

To carry out the scalability analysis, we assume for the sake of simplicity that $s_{j}^{+,-}=p \in \mathbb{R}^+$ and $\nu_j=1$ for all $j$. We study later how to optimize the choice for $s_j^{+,-}$. 
We first discuss the case in which every $\mathcal{B}_j$ represents a Dirichlet boundary condition, and then we treat the case in which Neumann b.c. are imposed everywhere, expect at the left boundary of $F_1$ (source fracture) and at the right boundary of $F_N$. More general configurations can be included straightforwardly in our analysis.

Due to the linearity of the problem, we define the errors $e_j^n:=u-u_j^n$ and study their convergence to zero. The errors $e_j$ satisfy an error system which is obtained setting $f_j=0$ in \eqref{eq:OSM}-\eqref{eq:OSM_2}. Inside each fracture, $e_j$ is harmonic and has the analytical expression 
\begin{align}\label{eq:expressions}
\medmuskip=-1mu
\thinmuskip=-1mu
\thickmuskip=-1mu
\nulldelimiterspace=0.9pt
\scriptspace=0.9pt 
\arraycolsep0.9em 
e^n_1&=\frac{\hat{e}^n_1 \tau_1}{\gamma_2} \chi([0,\gamma_2])+\frac{\hat{e}^n_1(L-\tau_1)}{L-\gamma_2} \chi([\gamma_2,L]),\\
e^n_j&=\frac{\hat{e}^{1,n}_j\tau_j}{\gamma_1} \chi([0,\gamma_1])+ \left(\frac{\hat{e}^{1,n}_j(\gamma_2-\tau_j)}{\gamma_2-\gamma_1} + \frac{\hat{e}^{2,n}_j(\tau_j-\gamma_1)}{\gamma_2-\gamma_1}\right)\chi([\gamma_1,\gamma_2]) +\frac{\hat{e}^{2,n}_j(L-\tau_j)}{L-\gamma_2} \chi([\gamma_2,L]),\nonumber\\
e^n_{N}&=\frac{\hat{e}^n_N \tau_{N}}{\gamma_1} \chi([0,\gamma_1])+\frac{\hat{e}^n_{N}(L-\tau_{N})}{L-\gamma_1} \chi([\gamma_1,L]),\nonumber
\end{align} 
$j=2,\dots,N$,
where the unknown coefficients collected into $\mathbf{e}^n:=(e^n_1,e_2^{1,n},e_2^{2,n},\dots,e_N^n)^\top\in \mathbb{R}^{\tilde{N}}$, $\tilde{N}:=2(N-2)+2$ represent the value of the error functions at the traces, while $\chi([a,b])$ are characteristic functions which satisfy $\chi(\tau)=1$ if $\tau\in[a,b]$ and zero otherwise.
Inserting these expressions into the transmission conditions \eqref{eq:continuity_condition}-\eqref{eq:flux_condition}, we aim to express  $\hat{e}^{i,n}_j$ in terms of the coefficients of the errors in fractures $j-1$ and $j+1$ at iteration $n-1$. A direct calculation, which we omit due to space limitation (see \cite{vanzan_thesis} for more details) leads to the recurrence relation $\mathbf{e}^n=T^D_{N}\mathbf{e}^{n-1}=M_{N}^{-1}N_{N}\mathbf{v}^{n-1}$,
%\begin{equation}\label{recurrence}
%\mathbf{e}^n=T^D_{N}\mathbf{e}^{n-1}=M_{N}^{-1}N_{N}\mathbf{v}^{n-1},
%\end{equation}
where $M_N, N_N\in \mathbb{R}^{\widetilde{N},\widetilde{N}}$ have the block structure
\scriptsize{\begin{align}\label{eq:M&N}
\setlength\arraycolsep{0.5pt}
M_{N}:=\begin{pmatrix}
F_1\\
& F_2\\
& & F_2\\
&&&& \ddots\\
&&&&& F_2\\
&&&&&& F_4
\end{pmatrix},\quad
N_{N}:=\begin{pmatrix}
& a & b\\
d_2\\
& & & a &b\\
& b & c\\ 
& & & & & \ddots &\ddots\\
& & & b & c & & &\\
& & & &\ddots & \ddots & a & b\\
& & & & & & & & & d_1\\
& & & & & & b & c \\
\end{pmatrix}
\end{align}}\normalsize
with blocks 
\footnotesize{\begin{align}
\setlength\arraycolsep{0.5pt}
F_1&:=p+\frac{L}{\gamma_2(L-\gamma_2)},\quad  F_2:=\begin{pmatrix}
p+\frac{\gamma_2}{\gamma_1(\gamma_2-\gamma_1)} & -\frac{1}{\gamma_2-\gamma_1}\\
-\frac{1}{\gamma_2-\gamma_1} & p+ \frac{L-\gamma_1}{(L-\gamma_2)(\gamma_2-\gamma_1)}\end{pmatrix} \quad F_4:=p+\frac{L}{\gamma_1(L-\gamma_1)},\\
a&:=p-\frac{\gamma_2}{\gamma_1(\gamma_2-\gamma_1)},\quad b:=\frac{1}{\gamma_2-\gamma_1},\quad c:=p-\frac{L-\gamma_1}{(L-\gamma_2)(\gamma_2-\gamma_1)},\quad d_j:=p-\frac{L}{\gamma_j(L-\gamma_j)}.
\end{align}
}\normalsize
The next theorem shows that the spectral radius of $T^D_N$ is bounded strictly below $1$ for every $N$ if Dirichlet b.c. are imposed on each fracture. Thus, the number of iterations to reach a given tolerance is indipendent of $N$, and the OSM is weakly scalable.
\begin{theorem}\label{thm:scalability_1D}
Let $\gamma_1+\gamma_2=L$ and $s_{j}^{+,-}=p$, $\forall j$. Then, the OSM is weakly scalable for the solution of problem \eqref{DFN} with Dirichlet b.c. on each $F_i$, in the sense that $\rho(T^D_{N})\leq C<1$, independently of $N$ for every $p>0$.
\end{theorem}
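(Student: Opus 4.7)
The plan is to exploit the symmetry $\gamma_1+\gamma_2=L$ to reduce $M_N$ and $N_N$ to a block form that is translation-invariant in the interior, with only low-rank corrections at the two boundary block-rows. First I would substitute $L-\gamma_2=\gamma_1$ and $L-\gamma_1=\gamma_2$ into the explicit formulas above. A direct calculation yields $F_1=F_4=p+L/(\gamma_1\gamma_2)$, $d_1=d_2=p-L/(\gamma_1\gamma_2)$, $a=c=p-\gamma_2/(\gamma_1(\gamma_2-\gamma_1))$, and the $2\times 2$ block $F_2$ becomes symmetric with equal diagonal entries. Consequently the interior rows of $T^D_N=M_N^{-1}N_N$ come from a block Toeplitz operator whose entries depend only on $\gamma_1,\gamma_2,p$, not on $N$.

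With the Toeplitz structure in hand, I would attack the generalized eigenvalue problem $N_N\mathbf{v}=\lambda M_N\mathbf{v}$ via the geometric ansatz $\mathbf{v}_j=\alpha\mu^{j}\mathbf{w}_+ + \beta\mu^{-j}\mathbf{w}_-$ for the interior index $j$, together with scalar amplitudes at $j=1,N$. Plugging into the two interior rows per fracture produces a $2\times 2$ transfer matrix whose characteristic equation yields a rational symbol $\lambda=\lambda(\mu,p)$. The boundary rows involving $F_1$, $F_4$, $d_1$, $d_2$ then close the system and provide a dispersion relation $\Phi_N(\lambda,p)=0$ whose roots are exactly the eigenvalues of $T^D_N$.

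To bound $\rho(T^D_N)$ uniformly in $N$, I would separate modes with $|\mu|=1$ from those with $|\mu|\neq 1$. Modes on the unit circle contribute eigenvalues lying on the symbol curve $\lambda(e^{i\theta},p)$, $\theta\in[0,2\pi)$, and a direct algebraic computation should give $\sup_\theta|\lambda(e^{i\theta},p)|=:C(p)<1$ for every $p>0$. Modes with $|\mu|\neq 1$ are boundary-localized; a continuity/perturbation argument in $N$ then shows that, as $N\to\infty$, the associated eigenvalues either converge to the symbol curve or to a finite set of roots of a fixed boundary equation, and one checks that none of these exceed $C(p)$ in modulus. Taking $C:=C(p)$ (independent of $N$) concludes the proof.

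The main obstacle I anticipate is the explicit verification that $|\lambda(e^{i\theta},p)|<1$ holds for every $\theta$ and every $p>0$, as the symbol is a ratio of trigonometric polynomials whose coefficients involve the three parameters $\gamma_1,\gamma_2,p$ and does not factor in an obvious way. A possible shortcut, which may be what the authors follow, is to show that both $M_N+N_N$ and $M_N-N_N$ are positive definite with a gap uniform in $N$ after a suitable symmetrization; this would reduce $\rho(T^D_N)<1$ to Gershgorin-type estimates on a banded matrix with uniform block entries, bypassing the symbol computation entirely.
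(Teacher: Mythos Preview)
Your approach is genuinely different from the paper's and considerably more elaborate. The paper does not pass through any symbol or transfer-matrix analysis. Instead it exploits the identity $\rho(M_N^{-1}N_N)=\rho(N_NM_N^{-1})$ together with the fact that $M_N$ is \emph{block-diagonal}: hence $N_NM_N^{-1}$ inherits the sparsity pattern of $N_N$, and under the hypothesis $\gamma_1+\gamma_2=L$ its rows fall into only two types. The infinity norm $\|N_NM_N^{-1}\|_\infty$ is then computed in closed form as the maximum of two explicit scalar expressions in $p,\gamma_2,L$, each of which is checked by hand to be strictly below $1$ for every $p>0$. This gives the $N$-independent bound in a few lines, with no eigenvector ansatz, no dispersion relation, and no boundary-mode perturbation argument.

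Your route via the block-Toeplitz symbol is in principle workable, but as you yourself note, the verification that $|\lambda(e^{i\theta},p)|<1$ for all $\theta$ and all $p>0$ is the crux and is left open; likewise the treatment of the boundary-localized modes (``a continuity/perturbation argument \dots one checks that none of these exceed $C(p)$'') is only asserted, not carried out. So as it stands the proposal is a plausible plan rather than a proof. The shortcut you mention at the end --- showing $M_N\pm N_N$ positive definite via Gershgorin-type estimates --- is closer in spirit to what the authors actually do, but their argument is still simpler: bound $\rho$ by $\|N_NM_N^{-1}\|_\infty$ directly, without symmetrization, leveraging that $M_N^{-1}$ is block-diagonal so the product has only a handful of distinct row patterns.
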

\begin{proof}
Notice that $ \rho(T^D_{N})=\rho(M^{-1}_{N}N_{N})=\rho(N_{N}M^{-1}_{N})\leq \|N_{N}M^{-1}_{N}\|_{\infty}$.
Direct calculations show that
\[ \|N_{N}M^{-1}_{N}\|_{\infty} = \max\left\{\left|\frac{p\gamma_2(L-\gamma_2)-L}{p\gamma_2(L-\gamma_2)+L}\right|,\frac{2p(L-\gamma_2)^2+\left|L+(L-2\gamma_2)(L-\gamma_2)^2p^2\right|}{(p(L-\gamma_2)+1)(p(L-\gamma_2)(2\gamma_2-L)+L)}\right\}.\]
The first term is clearly less than 1 for every $p>0$. For the second term, we distinguish two cases: if $L+(L-2\gamma_2)(L-\gamma_2)^2p^2<0$, then it simplifies to $\left|\frac {-1+ \left( L-\gamma_2 \right) p}{1+ \left( L-\gamma_2 \right) p}\right|$ which strictly less than 1.
Similarly, if $L+(L-2\gamma_2)(L-\gamma_2)^2 p^2\geq 0$, then $\frac{2p(L-\gamma_2)^2+\left|L+(L-2\gamma_2)(L-\gamma_2)^2p^2\right|}{(p(L-\gamma_2)+1)(p(L-\gamma_2)(2\gamma_2-L)+L)}=\left|{\frac {p \left( L-\gamma_2 \right)  \left( 2\gamma_2-L \right) -L}{
p \left( L-\gamma_2 \right)  \left(2\,\gamma_2-L \right) +L}}\right|<1$ being $2\gamma_2>L$.
Thus $\exists C<1$ indipendent on $N$ such that $\|N_{N}M^{-1}_{N}\|_{\infty}<C$ for every $p>0$.
\end{proof}
The hypothesis $\gamma_1+\gamma_2=L$ is used to simplify the otherwise coumbersome calculations, but it has not been observed in numerical experiments.

We emphasize that OSMs are not scalable for one-dimensional chains of fixed size-subdomains \cite{Chaouqui2018}. In our setting, the scalability is due to the geometrical configuration typical of DFNs, which permits to impose Dirichlet b.c. on each fracture, being the transmission conditions imposed in the interior. Thus, we observe error contraction before information is propagated through the iterations across the subdomains (see \cite[Section 3]{Chaouqui2018}).
With a similar argument, we expect the OSM not to be scalable if Neumann b.c. are applied on each fracture, as the errors in the middle fractures would require about $N/2$ to start contracting. To verify this, we can perform the same analysis by replacing \eqref{eq:expressions} with appropriate subdomains solutions. We then obtain the recurrence relation $\mathbf{e}^n=T^N_{N}\mathbf{e}^{n-1}=\widetilde{M}_{N}^{-1}\widetilde{N}_{N}\mathbf{v}^{n-1}$, where $\widetilde{M}_{N}$ $\widetilde{N}_{N}$ have the same structure of \eqref{eq:M&N}, but with blocks
\begin{equation*}
\medmuskip=-1mu
\thinmuskip=-1mu
\thickmuskip=-1mu
\nulldelimiterspace=0.9pt
\scriptspace=0.9pt 
\arraycolsep0.9em 
\begin{aligned}
\widetilde{F}_1&:=p+\frac{1}{\gamma_2},\quad \widetilde{F}_4:=p+\frac{1}{L-\gamma_1},\quad 
 \widetilde{F}_2:=\begin{pmatrix}
p+\frac{1}{\gamma_2-\gamma_1} & -\frac{1}{\gamma_2-\gamma_1}\\
-\frac{1}{\gamma_2-\gamma_1} & p+ \frac{1}{(\gamma_2-\gamma_1)}\end{pmatrix},\\
\widetilde{a}&:=p-\frac{1}{\gamma_2-\gamma_1},\quad \widetilde{b}:=\frac{1}{\gamma_2-\gamma_1},\quad \widetilde{c}:=\widetilde{a},\quad \widetilde{d}_j:=p-\frac{1}{(L-\gamma_j)}.
\end{aligned}
\end{equation*}
\begin{figure}
\centering
\includegraphics[scale=0.25]{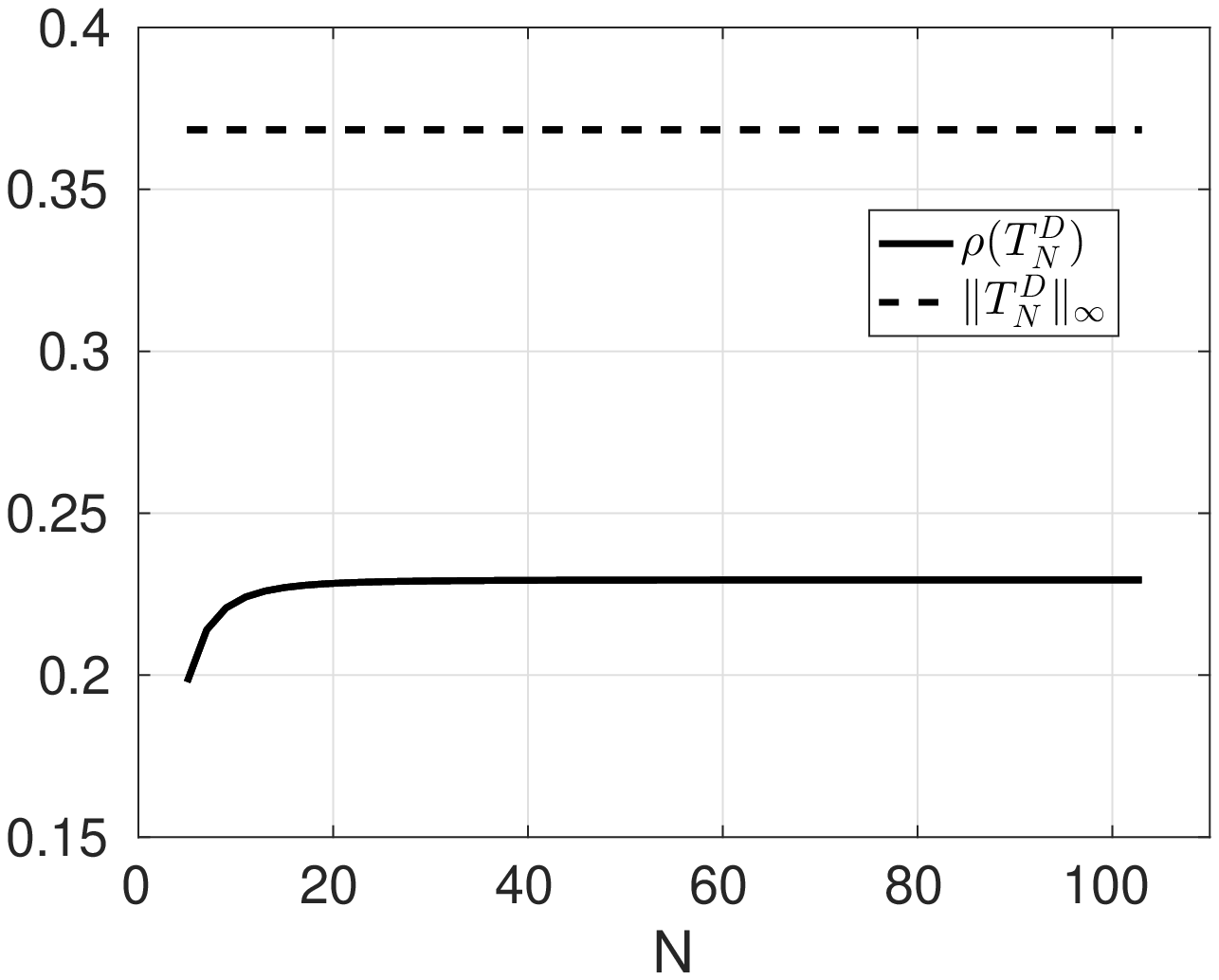}
\includegraphics[scale=0.25]{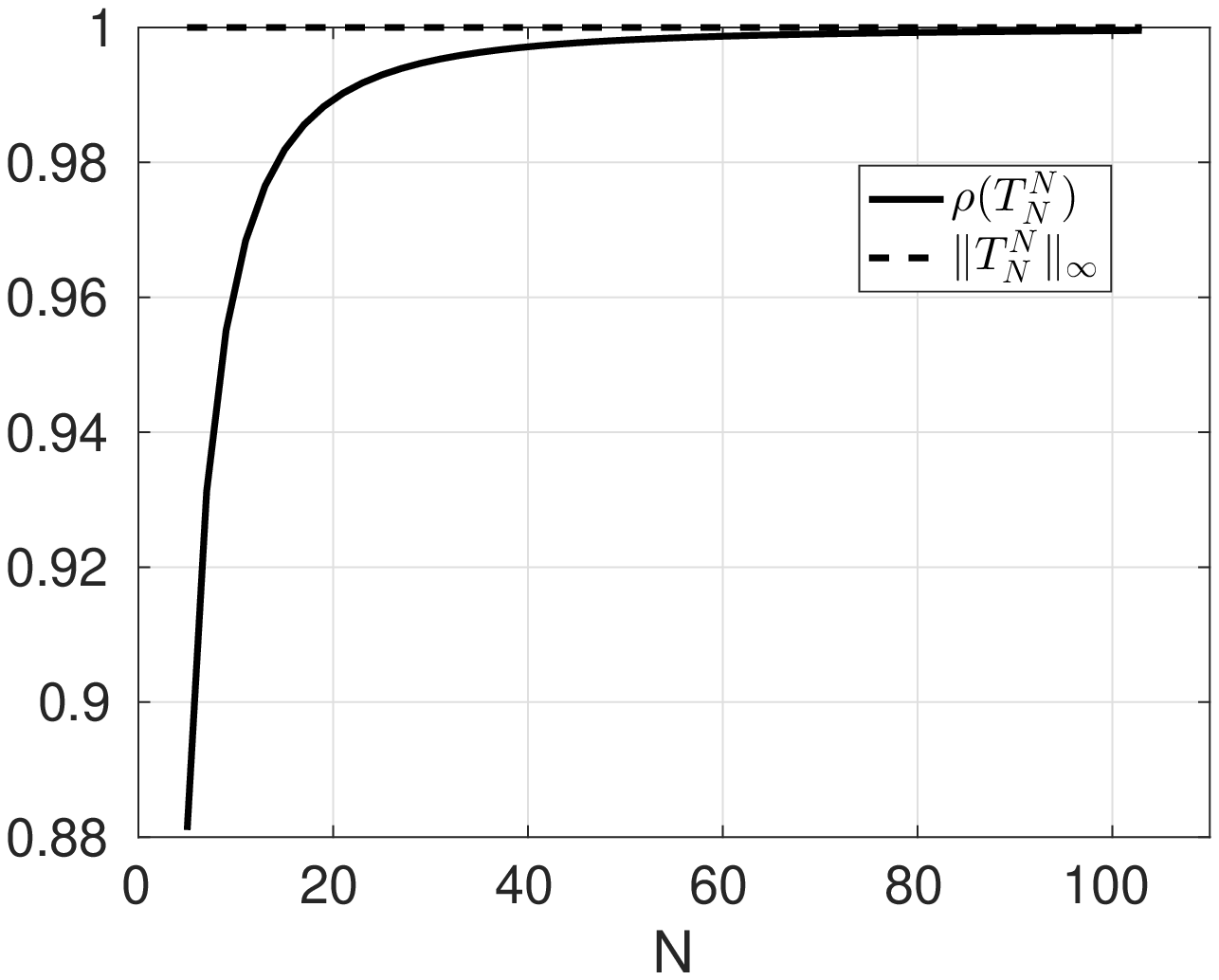}
\includegraphics[scale=0.25]{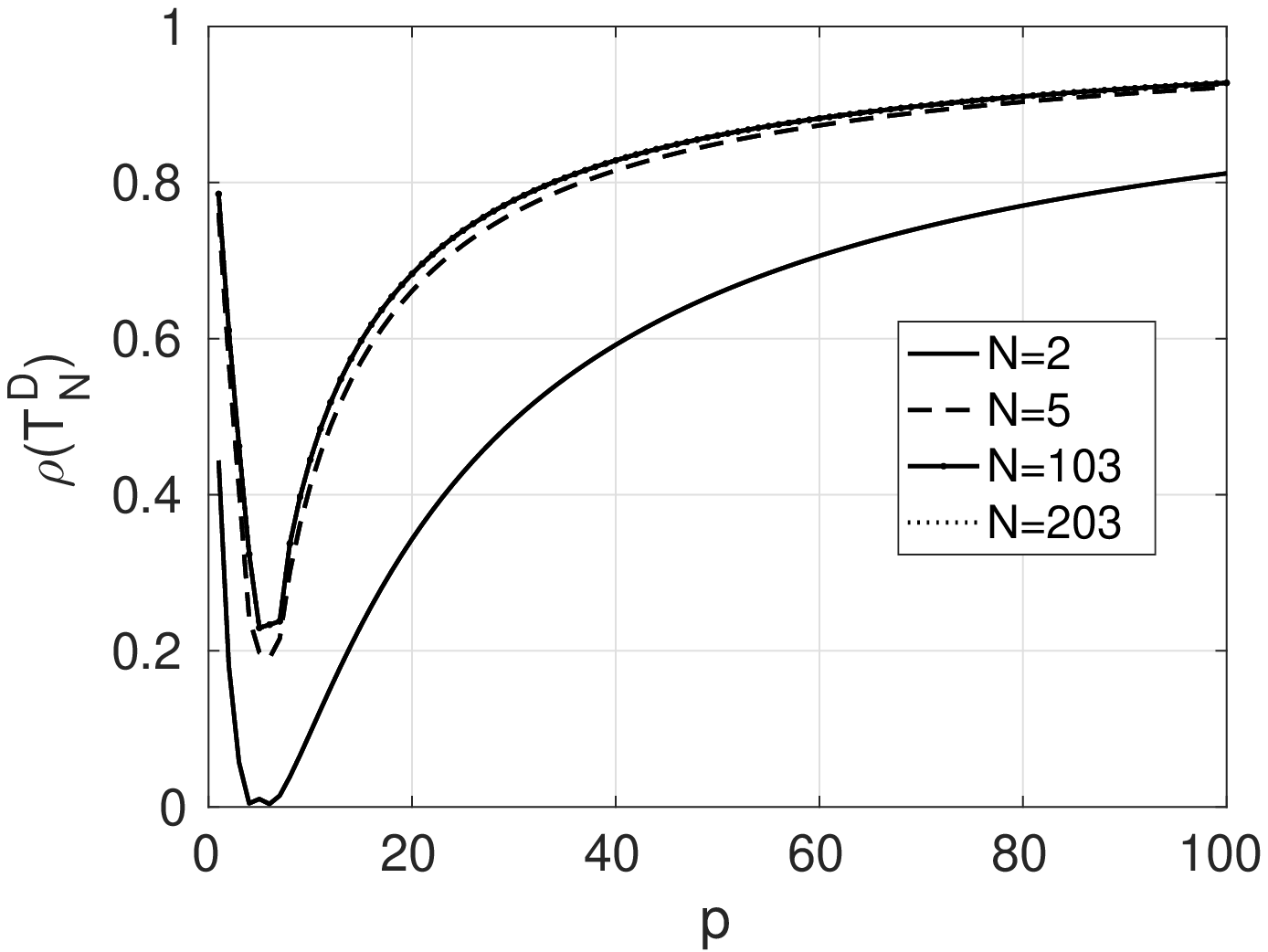}
\caption{Left and center panel: spectral radii of $T^D_{N}$ and $T^N_{N}$ as the number of fractures increases. Right panel: spectral radius of $T^D_{N}$ as $p$ varies. Parameters: $L=1$, $\gamma_1=0.2$, $\gamma_2=0.6$, $\nu=1$.}\label{Fig:scalability}
\end{figure}
The first two panels of Fig. \ref{Fig:scalability} show the dependence of the spectral radii of $T^D_{N}$ and $T^N_{N}$ as $N$ increases. While $\rho(T^D_{N})$ remains bounded below one, $\rho(T^N_{N})$ tends rapidly to one as $N$ grows, thus the OSM is not weakly scalable if Neumann b.c. are used.

We remark that in applications it is quite common to impose homogeneous Neumann b.c. in internal fractures because at the tip of the fracture the flow exchange with the surrouding matrix is negligible.
In such cases, the analysis suggests two possible heuristics to improve the convergence of DD solvers. The first one is to stress the importance of an efficient partition of the fractures into subdomains (each subdomain generally contains more than one fracture). Such partition should minimize the maximum, over floating subdomains, of the distance of each subdomain from the Dirichlet boundary $\Gamma_d$ (see \cite{vanzan_thesis} for numerical experiments). Recall that a subdomain $\Omega_j$ is called ``floating subdomain" if $\partial \Omega_j \cap \Gamma_D=\emptyset$. 
The second heuristic is to replace Neumann b.c. with Robin ones (which would also model the realistic case of a flux across $\partial F_j$).
Ref \cite{ciaramella2021effect} showed that Robin b.c. permits to recover scalability of OSM as in the Dirichlet case.
 
Notice that the rate of convergence of OSMs, which may be indipendent of $N$ (see discussion above), still depends on the transmission conditions, hence it is important to have good estimates of the parameters $s_{j}^{+,-}$.
To estimate them, we consider two fractures $F_1$ and $F_2$, which are coupled across a single trace. The general solutions are given by
\begin{align}
\medmuskip=-1mu
\thinmuskip=-1mu
\thickmuskip=-1mu
\nulldelimiterspace=0.9pt
\scriptspace=0.9pt 
\arraycolsep0.9em 
e^n_1&=\frac{\hat{e}^n_1 \tau_1}{\gamma_2} \chi([0,\gamma_2])+\frac{\hat{e}^n_1(L-\tau_1)}{L-\gamma_2} \chi([\gamma_2,L]),\quad 
e^n_2&=\frac{\hat{e}^n_2 \tau_1}{\gamma_1} \chi([0,\gamma_1])+\frac{\hat{e}^n_2(L-\tau_1)}{L-\gamma_1} \chi([\gamma_1,L]),
\end{align}
where the unknowns are the two coefficients $\hat{e}^n_1$ and $\hat{e}^n_2$.
Inserting these solutions in the transmission conditions we obtain the scalar recurrence relation 
\[\hat{e}^n_j=\rho_{1D}(s^-_1,s^+_1,\nu_1,\nu_2)\hat{e}^{n-2}_j,\ j=1,2,\quad  \rho_{1D}(s^-_1,s^+_1,\nu_1,\nu_2)=\frac{\left(\frac{\nu_2L}{\gamma_1(L-\gamma_1)}-s^-_1\right)\left(\frac{\nu_1L}{\gamma_2(L-\gamma_2)}-s^+_1\right)}{\left(\frac{\nu_1L}{\gamma_2(L-\gamma_2)}+s^-_1\right)\left(\frac{\nu_2L}{\gamma_1(L-\gamma_1)}+s^+_1\right)}.\]
If we chose $s_1^-=s^{-,\opt}_1:=\frac{\nu_2L}{\gamma_1(L-\gamma_1)}$ and $s_1^+=s^{+,\opt}_1:=\frac{\nu_1L}{\gamma_2(L-\gamma_2)}$, we would have $\rho(s^-_1,s^+_1,\nu_1,\nu_2)=0$, that is the OSM is nilpotent. The right panel of Fig. \ref{Fig:scalability} verifies that the two fracture analysis provides very good estimates for the optimal Robin parameters in the many-fractures case.

\section{Scalability analysis for two-dimensional DFNs}

In this section we consider the two dimensional extension of Fig. \ref{fig:1Dstair}. Each fracture $F_j$ is a two dimensional polygon, see Fig. \ref{DFN2D}, and the traces, denoted by $S_j$, are straight segments crossing the whole fracture.
On each fracture, the local reference system has coordinates $\{\tau_1,\tau_2\}$.
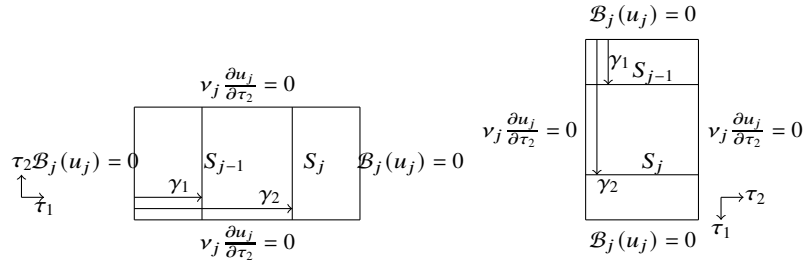
\begin{figure}
\centering
\begin{tikzpicture}[scale=0.30]
\draw[black] (-5,5) -- (5,5);
\draw[black] (5,5) -- (5,0);
\draw[black] (5,0) -- (-5,0);
\draw[black] (-5,0) -- (-5,5);
\draw[black] (-2,0) -- (-2,5);
\draw[black] (2,0) -- (2,5);
\node at (0,6) {$\nu_j\frac{\partial u_j}{\partial \tau_2}=0$};
\node at (0,-1) {$\nu_j\frac{\partial u_j}{\partial \tau_2}=0$};
\node at (-7.2,2.5) {$\mathcal{B}_j( u_j)=0$};
\node at (7.2,2.5) {$\mathcal{B}_j( u_j)=0$};
\node at (-1,2.5) {$S_{j-1}$};
\draw[->] (-5,1) -- (-2,1);
\node at (-3,1.5) {$\gamma_1$};
\draw[->] (-5,0.5) -- (2,0.5);
\node at (1,1) {$\gamma_2$};
\node at (3,2.5) {$S_j$};
\draw[->] (-10,1) -- (-9,1);
\node at (-9,0.5) {$\tau_1$};
\draw[->] (-10,1) -- (-10,2);
\node at (-10,2.5) {$\tau_2$};

\draw[black] (15,8) -- (15,0);
\draw[black] (15,8) -- (20,8);
\draw[black] (15,0) -- (20,0);
\draw[black] (20,0) -- (20,8);
\node at (22.5,4) {$\nu_j\frac{\partial u_j}{\partial \tau_2}=0$};
\node at (12.5,4) {$\nu_j\frac{\partial u_j}{\partial \tau_2}=0$};
\node at (17.5,9) {$\mathcal{B}_j( u_j)=0$};
\node at (17.5,-1) {$\mathcal{B}_j( u_j)=0$};
\node at (18,6.5) {$S_{j-1}$};
\draw[black] (15,6) -- (20,6);
\draw[black] (15,2) -- (20,2);
\node at (18,2.5) {$S_{j}$};

\draw[->] (16,8) -- (16,6);
\node at (16.5,7) {$\gamma_1$};
\draw[->] (15.5,8) -- (15.5,2);
\node at (16,1.5) {$\gamma_2$};

\draw[->] (21,1) -- (22,1);
\node at (22.5,1) {$\tau_2$};
\draw[->] (21,1) -- (21,0);
\node at (21,-0.5) {$\tau_1$};
\end{tikzpicture}
\caption{Geometry of a two dimensional fracture.}\label{DFN2D}
\end{figure}
Due to the geometrical configuration, the error can be expanded in Fourier series in each fracture, i.e.
$e_j=\sum_{k=0}^{\infty} \tilde{e}_j(\tau_1,k)\cos(\frac{k\pi}{L}\tau_2)$. The Fourier coefficients $\tilde{e}_j(\tau_1,k)$ are obtained imposing the b.c. and the transmission conditions.
The long expressions are omitted due to space limitation (see for complete expressions \cite{vanzan_thesis}). We only report the expressions for the first subdomain
\begin{align}
\tilde{e}^n_1(\tau_1,k)&=\hat{e}^n_1(k)\frac{\sinh(\frac{k\pi}{L}\tau_1)}{\sinh(\frac{k\pi}{L}\gamma_2)}\chi([0,\gamma_2])+ \hat{e}^n_1(k) \frac{\sinh(\frac{k\pi}{L}(L-\tau_1))}{\sinh(\frac{k\pi}{L}(L-\gamma_2))}\chi([\gamma_2,L]),\ k>0,\\
\tilde{e}^n_1(\tau_1,0)&=\frac{\hat{e}^n_1(0) \tau_1}{\gamma_2} \chi([0,\gamma_2])+\frac{\hat{e}^n_1(0)(L-\tau_1)}{L-\gamma_2} \chi([\gamma_2,L]),\ k=0.
\end{align} 
The unknowns $\hat{e}^{i,n}_j(k)$ are the values attained by the $k$-th mode of the Fourier expansions at each trace. In numerical computations, $k\in [k_{\min},k_{\max}]$ for Dirichlet b.c. while $k\in [0,k_{\max}]$ for Neumann b.c., $k_{\max}=\frac{\pi}{h}$ being the maximum frequency supported by the numerical grid and $k_{\min}=\frac{\pi}{L}$.
Similarly to the 1D case, one can obtain recurrence relations which link the Fourier coefficients of one fracture at iteration $n$ as functions of the Fourier coefficients of the neighbouring fractures at iteration $n-1$.
In particular for $k=0$, $\mathbf{v}^n_0:=\left(\hat{e}^n_1(0),\hat{e}^{1,n}_2(0),\hat{e}^{2,n}_2(0),\dots,\hat{e}^n_N(0)\right)$ satisfies $\mathbf{v}^n_0=T^D_{N}\mathbf{v}^{n-1}_0$, where $T^D_{N}$ is the matrix of the 1D system with Dirichlet b.c..
For $k>0$, we obtain instead $\mathbf{v}^n_k=T^{2D}_N(k)\mathbf{v}^{n-1}_k$, where
$T^{2D}_N=M^{-1}_{2D}N_{2D}$ has the same block structure of the 1D case but with blocks defined as  
\[F_2:=\begin{pmatrix}
p+\coth(\frac{k\pi}{L}\gamma_1)+\coth(\frac{k\pi}{L}(\gamma_2-\gamma_1)) & -\frac{1}{\coth(\frac{k\pi}{L}(\gamma_2-\gamma_1)}\\
-\frac{1}{\sinh(\frac{k\pi}{L}(\gamma_2-\gamma_1)} & p+\coth(\frac{k\pi}{L}(L-\gamma_2))+\coth(\frac{k\pi}{L}(\gamma_2-\gamma_1))
\end{pmatrix},\]
$F_1:=p+\coth(\frac{k\pi}{L}\gamma_2)+\coth(\frac{k\pi}{L}(L-\gamma_2))$ and $F_4:=p+\coth(\frac{k\pi}{L}(L-\gamma_1))+\coth(\frac{k\pi}{L}(\gamma_1))$.
On the other hand the coefficients of $N_{2D}$ are 
\begin{align*}
\medmuskip=-1mu
\thinmuskip=-1mu
\thickmuskip=-1mu
\nulldelimiterspace=0.9pt
\scriptspace=0.9pt 
\arraycolsep0.9em 
a&:=p-\coth\left(\frac{k\pi}{L}\gamma_1\right)-\coth\left(\frac{k\pi}{L}(\gamma_2-\gamma_1)\right),\quad b:=\frac{1}{\sinh(\frac{k\pi}{L}(\gamma_2-\gamma_1)},\\
c&:=p-\coth\left(\frac{k\pi}{L}(L-\gamma_2)\right)-\coth\left(\frac{k\pi}{L}(\gamma_2-\gamma_1)\right),\quad d_j:=p-\coth\left(\frac{k\pi}{L}(L-\gamma_j)\right)-\coth\left(\frac{k\pi}{L}\gamma_j\right).
\end{align*}
Fig \ref{Fig:scalability2D} shows numerically that the OSM is scalable also for a 2D DFN with Dirichlet b.c.. Observing that the frequency $k=0$ behaves according to the 1D analysis, we expect the OSM with Neumann b.c. on each fracture except on the first and last ones not to be weakly scalable. Repeating the calculations one finds an iteration matrix $\widetilde{T}^{2D}_N$ and Fig. \ref{Fig:scalability2D} confirms this conclusion.
\begin{figure}
\centering
\includegraphics[scale=0.255]{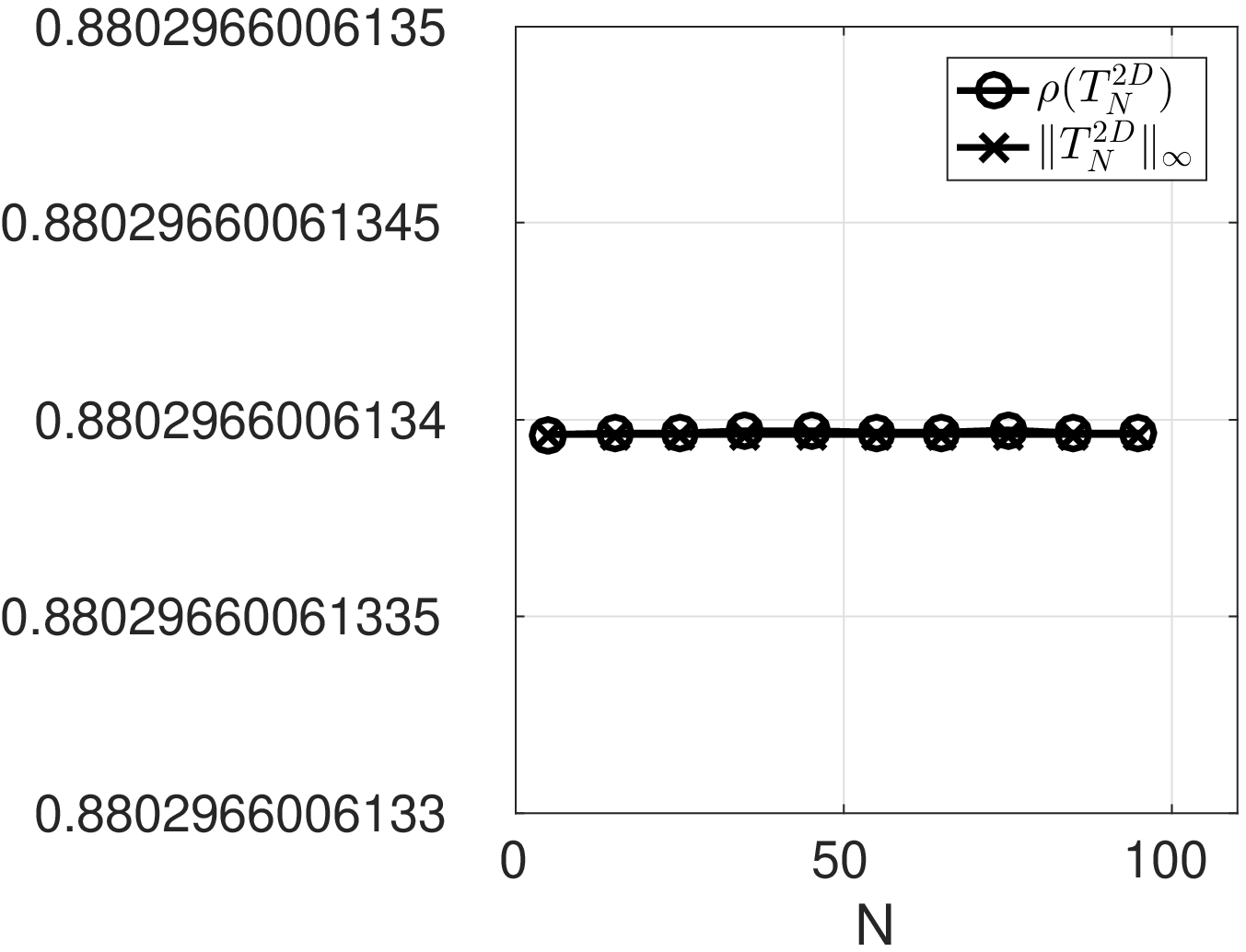}
\includegraphics[scale=0.255]{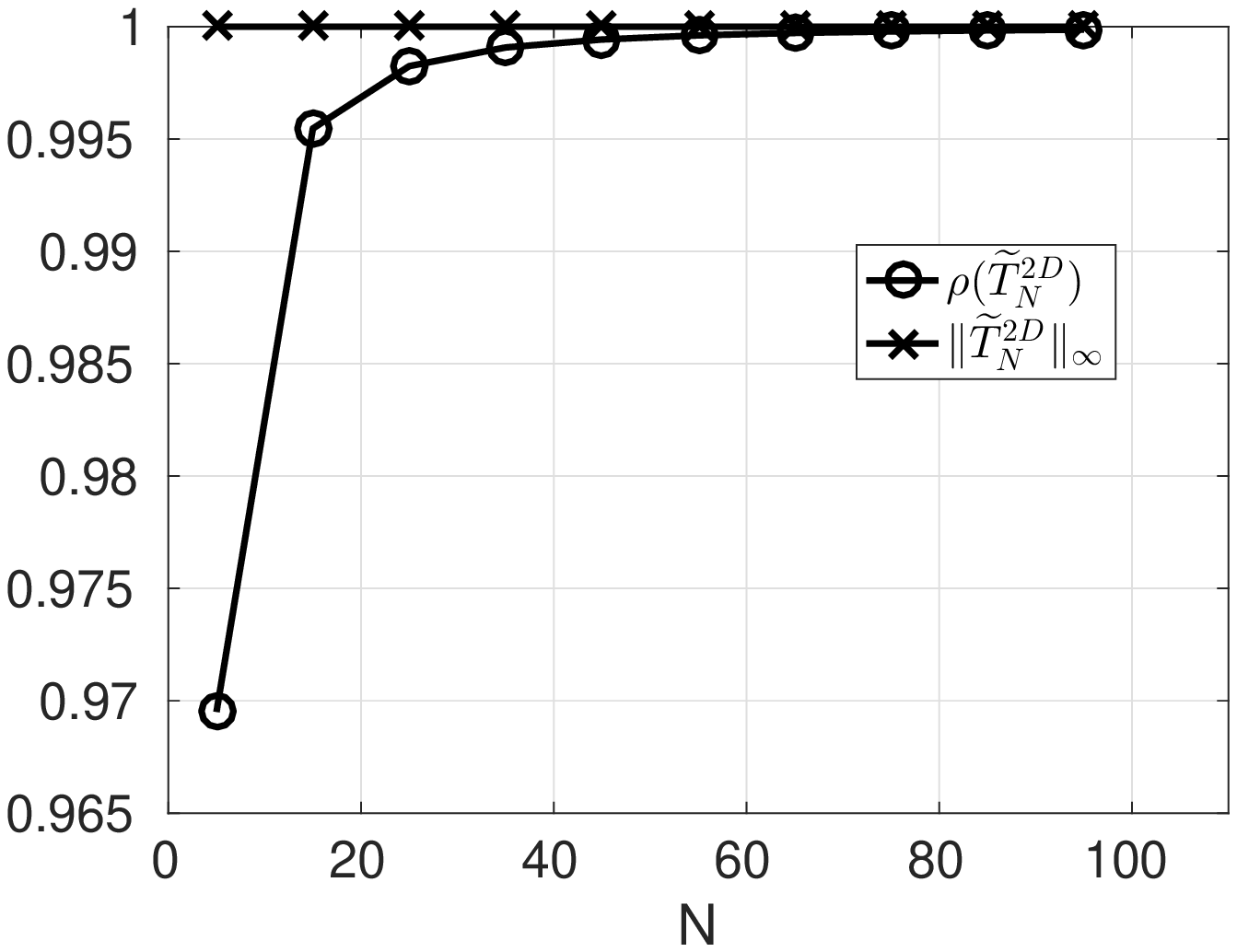}
\includegraphics[scale=0.255]{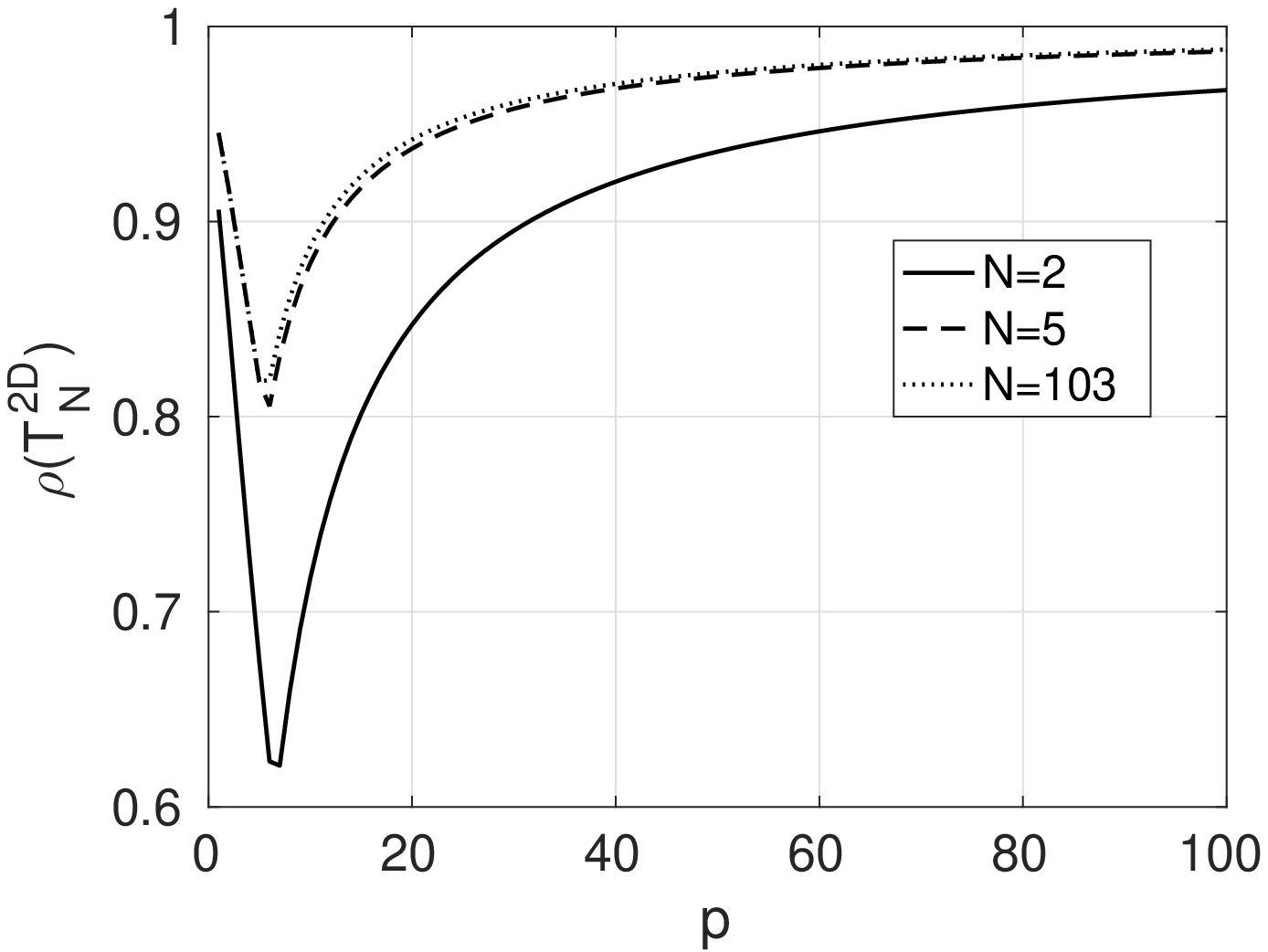}
\caption{Left and center panel: spectral radii of $\max_{k\in [k_{\min},k_{\max}}T_N^{2D}(k)$ and $\max_{k\in (0,k_{\max})]}\widetilde{T}_N^{2D}(k)$ as $N$ grows. Parameters: $L=1$, $\gamma_1=0.2$,$\gamma_2=0.6$ and $p=20$. Right panel: $\max_{k\in [k_{\min},k_{\max}]}T_N^{2D}(k)$ as $p$ varies.}\label{Fig:scalability2D}
\end{figure}

We now  derive the optimized parameters by analyzing the coupling of two fractures. Inserting the Fourier expansions into the transmission conditions and defining
\begin{equation*}
\medmuskip=-1mu
\thinmuskip=-1mu
\thickmuskip=-1mu
\nulldelimiterspace=0.9pt
\scriptspace=0.9pt 
\arraycolsep0.9em 
f_1(k):=\frac{\nu_2k\pi}{L}\left(\coth\left(\frac{k\pi}{L}\gamma_1\right)+\coth\left(\frac{k\pi}{L}(L-\gamma_1)\right)\right),\quad f_2(k):=\frac{\nu_1k\pi}{L}\left(\coth\left(\frac{k\pi}{L}\gamma_2\right)+\coth\left(\frac{k\pi}{L}(L-\gamma_2)\right)\right),
\end{equation*}
we obtain $\hat{e}^n_j(k)=\rho(k,s^-_1,s^+_1)\hat{e}^{n-2}_j(k)$, for $k>0$,$j=1,2$, where $\rho(k,s^-_1,s^+_1):=\frac{f_1(k)-s^-_1}{f_2(k)+s^-_1}\frac{f_2(k)-s^+_1}{f_1(k)+s^+_1}$.
On the other hand, for the constant mode $k=0$ we recover the 1D result: $\hat{e}^n_j(0)=\rho_{1D}(s^-_1,s^+_1)\hat{e}^{n-2}_j(0)$.
To derive optimized parameters, we set $s^-_1=f_1(p)$, $s^+_1=f_2(p)$ for some $p\in \mathbb{R}^+$, and we study
\begin{equation}\label{eq:minmax2}
\min_{p \in \mathbb{R}^+}\max\left\{\rho_{1D}(p),\max_{k\in [\frac{\pi}{L},k_{\max}]} \rho(k,p)\right\}.
\end{equation}
Despite $\rho(k,p)$ is not defined at $k=0$, since $\coth(\cdot)$ has a singularity, we observe that
$\lim_{k\rightarrow 0} \rho(k,p)= \rho_{1D}(p)$.
Thus, we introduce the function $\widetilde{\rho}(k,p)=\rho(k,p)$ for $k>0$ and $\widetilde{\rho}(0,p)=\rho_{1D}(p)$, and further simplify the min-max problem to 
\begin{equation}\label{eq:minmax3}
\min_{p \in \mathbb{R}^+}\max_{k\in [0,k_{\max}]} \widetilde{\rho}(k,p).
\end{equation}
The next theorem can be proved using the very same steps of \cite[Theorem 2.3]{gander2019heterogeneous}. Fig. \ref{Fig:scalability2D} confirms that effectiveness of the analysis even in the many-fractures case.

\begin{theorem}\label{theorem1}
  The solution of the min-max problem \eqref{eq:minmax3} is given by the
  unique $p^*$ which satisfies $\widetilde{\rho}(0,p)=\widetilde{\rho}(k_{\max},p)$.
\end{theorem}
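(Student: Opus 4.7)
Following the blueprint of \cite[Thm.~2.3]{gander2019heterogeneous}, the plan is to reduce the min-max problem \eqref{eq:minmax3} to an equioscillation condition in two stages: first show that for each fixed $p>0$ the maximum of $\widetilde{\rho}(\cdot,p)$ over $[0,k_{\max}]$ is attained at one of the endpoints $k=0$ or $k=k_{\max}$, and then exploit opposite monotonicities in $p$ of the two endpoint values to conclude that the minimum is attained precisely where they coincide.

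\textbf{Step 1: behavior in $k$ at fixed $p$.} Writing $s_1^-=f_1(p)$ and $s_1^+=f_2(p)$, the convergence factor takes the factored form
\[
\widetilde{\rho}(k,p) \;=\; \frac{f_1(k)-f_1(p)}{f_2(k)+f_1(p)}\;\cdot\;\frac{f_2(k)-f_2(p)}{f_1(k)+f_2(p)}.
\]
Since $f_1,f_2$ are strictly positive and strictly increasing in $k$ (the map $t\mapsto t\coth(\alpha t)$ is increasing for every $\alpha>0$), the two numerators have the same sign, so $\widetilde{\rho}(k,p)\geq 0$ with equality only at $k=p$. I would then verify, by differentiating in $k$ and ruling out interior critical points other than $k=p$, that $\widetilde{\rho}(\cdot,p)$ is strictly decreasing on $(0,p)$ and strictly increasing on $(p,\infty)$. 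Hence the continuous function $\widetilde{\rho}(\cdot,p)$ attains its maximum on $[0,k_{\max}]$ at one of the endpoints.

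\textbf{Step 2: monotonicity in $p$ of the endpoint values.} Next I would show that $p\mapsto \widetilde{\rho}(0,p)=\rho_{1D}(p)$ is strictly increasing on $(0,k_{\max}]$, while $p\mapsto \widetilde{\rho}(k_{\max},p)$ is strictly decreasing on $[0,k_{\max})$. Both statements follow by inspecting the derivative of the Moebius-type factor $(f_j(\bar k)-f_j(p))/(f_j(\bar k)+f_j(p))$ in $p$ for $\bar k\in\{0,k_{\max}\}$: as $p$ moves away from $\bar k$, the numerator grows in absolute value while the denominator stays bounded. At the extremes one has $\widetilde{\rho}(0,0)=0<\widetilde{\rho}(k_{\max},0)$ and $\widetilde{\rho}(0,k_{\max})>0=\widetilde{\rho}(k_{\max},k_{\max})$, so intermediate value combined with strict monotonicity yields a unique $p^*\in(0,k_{\max})$ with $\widetilde{\rho}(0,p^*)=\widetilde{\rho}(k_{\max},p^*)$.

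\textbf{Conclusion and main obstacle.} For $p<p^*$ the maximum equals $\widetilde{\rho}(k_{\max},p)$, which is decreasing in $p$, so the objective strictly decreases with $p$; for $p>p^*$ the maximum equals $\widetilde{\rho}(0,p)$, which is increasing, so the objective strictly increases. Thus $p^*$ is the unique minimizer, proving the theorem. The main obstacle I anticipate is Step~1: because the two fractions in $\widetilde{\rho}(k,p)$ have mismatched denominators ($f_2(k)+f_1(p)$ and $f_1(k)+f_2(p)$), $\widetilde{\rho}$ is not literally a product of two same-variable difference-over-sum terms, so ruling out a spurious interior local maximum requires the careful sign analysis of the derivative that is the technical core imported from \cite[Thm.~2.3]{gander2019heterogeneous}.
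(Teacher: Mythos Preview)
Your proposal is correct and takes essentially the same approach as the paper: the paper gives no detailed proof of Theorem~\ref{theorem1} at all, merely stating that it ``can be proved using the very same steps of \cite[Theorem 2.3]{gander2019heterogeneous},'' and your sketch is precisely an outline of that argument (endpoint maxima in $k$, opposite monotonicity in $p$, equioscillation). Your identification of the mismatched denominators as the technical point requiring care is apt and is exactly the content borrowed from the cited reference.
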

Future works will focus on testing the results of the analysis presented on more realistic DFN configurations.
\vspace*{-0.2cm}
\bibliographystyle{spmpsci.bst}
\bibliography{references.bib}
\end{document}